\pdfoutput=1
\documentclass[11pt]{amsart}

\usepackage{amsmath}
\usepackage{amssymb}
\usepackage{amsthm}
\usepackage{mathtools}
\usepackage{booktabs}
\usepackage{geometry}
\usepackage{hyperref}
\usepackage{cleveref}
\usepackage{float}
\usepackage{tikz}

\hypersetup{colorlinks=true, linkcolor=blue!55!black, citecolor=blue!55!black, urlcolor=blue!55!black}

\newcommand{\gpgen}[6][1.0]{%
\begin{tikzpicture}[scale=#1,every node/.style={font=\tiny}]
  \pgfmathtruncatemacro{\nn}{#2}\pgfmathtruncatemacro{\last}{#2-1}
  \pgfmathsetmacro{\stepang}{360/#2}
  \pgfmathsetmacro{\Ro}{#4}\pgfmathsetmacro{\Ri}{#4*0.56}
  \foreach \i in {0,...,\last}{
    \coordinate (u\i) at ({90-\stepang*\i}:\Ro);
    \coordinate (w\i) at ({90-\stepang*\i}:\Ri);}
  \foreach \i in {0,...,\last}{
    \pgfmathtruncatemacro{\j}{mod(\i+1,\nn)}
    \pgfmathtruncatemacro{\m}{mod(\i+#3,\nn)}
    \draw[gray!70] (u\i)--(u\j);
    \draw[gray!70] (u\i)--(w\i);
    \draw[gray!70] (w\i)--(w\m);}
  \foreach \i in {0,...,\last}{
    \fill[white,draw=black] (u\i) circle(0.15);
    \fill[white,draw=black] (w\i) circle(0.15);}
  \foreach \i in {#5}{\ifnum\i>-1 \fill[black,draw=black] (u\i) circle(0.15);\fi}
  \foreach \i in {#6}{\ifnum\i>-1 \fill[black,draw=black] (w\i) circle(0.15);\fi}
  \foreach \i in {0,...,\last}{
    \node at ({90-\stepang*\i}:\Ro+0.42) {$u_{\i}$};
    \node at ({90-\stepang*\i}:\Ri-0.4) {$v_{\i}$};}
\end{tikzpicture}}

\usepackage{aliascnt}
\newtheorem{theorem}{Theorem}
\newcommand{\newaliasedtheorem}[2]{%
  \newaliascnt{#1}{theorem}%
  \newtheorem{#1}[#1]{#2}%
  \aliascntresetthe{#1}}
\newaliasedtheorem{corollary}{Corollary}
\newaliasedtheorem{lemma}{Lemma}
\newaliasedtheorem{proposition}{Proposition}
\newaliasedtheorem{conjecture}{Conjecture}
\theoremstyle{definition}
\newaliasedtheorem{definition}{Definition}
\theoremstyle{remark}
\newaliasedtheorem{remark}{Remark}

\crefname{theorem}{Theorem}{Theorems}
\Crefname{theorem}{Theorem}{Theorems}
\crefname{corollary}{Corollary}{Corollaries}
\Crefname{corollary}{Corollary}{Corollaries}
\crefname{lemma}{Lemma}{Lemmas}
\Crefname{lemma}{Lemma}{Lemmas}
\crefname{proposition}{Proposition}{Propositions}
\Crefname{proposition}{Proposition}{Propositions}
\crefname{conjecture}{Conjecture}{Conjectures}
\Crefname{conjecture}{Conjecture}{Conjectures}
\crefname{definition}{Definition}{Definitions}
\Crefname{definition}{Definition}{Definitions}
\crefname{remark}{Remark}{Remarks}
\Crefname{remark}{Remark}{Remarks}

\title[A correction to the zero forcing number of $P(n,3)$]{A correction to the Zero Forcing Number\\ of the Generalized Petersen Graphs $P(n,3)$}

\author{Arnav Krishnan}
\email{arnav.s.krishnan@gmail.com}
\date{July 13, 2026}
\subjclass[2020]{05C50, 05C57, 05C76}
\keywords{zero forcing number, generalized Petersen graph, color change rule, forcing set}

\begin{document}

\begin{abstract}
Rashidi, Shajareh Poursalavati, and Tavakkoli [J. Algebra Comb. Discrete Struct.
Appl. 7 (2020), no.~2, 183--193, Theorem 3.6] claim $Z(P(n,3)) = 8$ for all
$n \geq 12$, but this fails at $n = 12$, where $Z(P(12,3)) = 7$.
We provide an explicit $7$-vertex zero forcing set for $P(12,3)$ with a fully
traced forcing cascade, and confirm by exhaustive search that no $6$-vertex set
forces $P(12,3)$. We prove $Z(P(n,3)) \le 8$ for $n \ge 9$ using one explicit
witness and symmetry. Exhaustive search yields $Z(P(n,3))$ for $7 \le n \le 20$
and identifies the gap in the published proof: its case analysis fails to
exclude $7$-vertex sets. We conjecture $Z(P(n,3)) = 8$ for $n \ge 13$; the
missing ingredient is a lower-bound proof valid for all large $n$.
\end{abstract}

\maketitle

\section{Introduction}

Let $G = (V,E)$ be a finite simple graph. Color a subset $S \subseteq V$ black and the rest white. The \emph{color change rule}: a black vertex with exactly one white neighbor forces that neighbor black. Iterating from $S$ until no force remains gives the \emph{closure} $\mathrm{cl}(S)$. Then $S$ is a \emph{zero forcing set} if $\mathrm{cl}(S) = V$, and the \emph{zero forcing number} $Z(G)$ is the minimum size of one. The parameter arose in the minimum rank problem for graph-patterned symmetric matrices~\cite{AIM2008}.

The \emph{generalized Petersen graph} $P(n,k)$, for $n \geq 3$ and $1 \le k < n/2$, has vertices
\[
V = \{u_0, \dots, u_{n-1}\} \cup \{v_0, \dots, v_{n-1}\},
\]
and edges the outer cycle $u_i \sim u_{i+1}$, the spokes $u_i \sim v_i$, and the inner edges $v_i \sim v_{i+k}$, all indices mod $n$ (\Cref{fig:struct}). It is $3$-regular on $2n$ vertices.

\begin{figure}[h]
\centering
\gpgen[0.62]{12}{3}{2.4}{-1}{-1}
\caption{$P(12,3)$: outer cycle $u_i\sim u_{i+1}$, spokes $u_i\sim v_i$, inner edges $v_i\sim v_{i+3}$.}
\label{fig:struct}
\end{figure}

Rashidi, Shajareh Poursalavati, and Tavakkoli~\cite{RPT2020} gave the first systematic study of $Z(P(n,k))$. Their main results:
\begin{itemize}
\item $Z(P(n,k)) \le 2k+2$ for all valid $n,k$ (Theorem 2.2), sharper when $n = rk+s$ (Theorem 2.1);
\item $Z(P(n,2)) = 6$ for $n \ge 10$ (Theorem 2.5);
\item $Z(P(2k+1,k)) = 6$ for $k \ge 5$ (Theorem 2.6);
\item $Z(P(n,3)) = 8$ for $n \ge 12$ (Theorem 3.6).
\end{itemize}
We reproduced the equalities $Z(P(n,2)) = 6$ ($10 \le n \le 16$) and $Z(P(11,5)) = 6$ by exhaustive search; both hold. Theorem 3.6 does not. It fails at $n = 12$, the smallest case in its own range $n \ge 12$, where $Z = 7$. This note does three things: documents the counterexample with a fully checkable proof (\Cref{sec:counterexample}); proves $Z(P(n,3)) \le 8$ for all $n \ge 9$ with no computer (\Cref{sec:upper}); and records the corrected values and the exact flawed step (\Cref{sec:corrected}).

\section{The counterexample}\label{sec:counterexample}

\begin{theorem}\label{thm:main}
$Z(P(12,3)) = 7$. In particular \cite[Theorem 3.6]{RPT2020} is false at $n = 12$.
\end{theorem}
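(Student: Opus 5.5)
The plan has two halves: the upper bound $Z(P(12,3)) \le 7$ by an explicit construction, and the lower bound $Z(P(12,3)) \ge 7$ by a finite, symmetry-reduced verification. The second assertion of the theorem then follows immediately, because $7 \neq 8$ contradicts \cite[Theorem 3.6]{RPT2020} at $n=12$.

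For the upper bound I would exhibit a set $S$ of seven vertices and trace its closure force by force, listing at each step the forcing vertex, its unique white neighbour, and the current black set, so a reader can check $\mathrm{cl}(S)=V$ by hand. To find $S$, I would start from the natural pattern behind the bound $2k+2=8$ of \cite[Theorem 2.2]{RPT2020}: a block of consecutive outer vertices together with their spoke partners. I would then try to drop one vertex, exploiting the coincidence $12 = 4\cdot 3$. This coincidence makes the inner graph a union of three $4$-cycles $v_i v_{i+3} v_{i+6} v_{i+9}$, so forcing along an inner $4$-cycle closes up after only four steps. Concretely, I would take three consecutive outer vertices with their spokes, plus one more vertex chosen to seed a second inner $4$-cycle. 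I would then test the variants by computer and record one that works. Checking the cascade is routine once $S$ is fixed.

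For the lower bound, the key observation is monotonicity: any superset of a zero forcing set is again a zero forcing set. Hence if some set of at most $6$ vertices forced $P(12,3)$, then some set of exactly $6$ vertices would too. It therefore suffices to show that no $6$-subset of the $24$ vertices is forcing, which means checking the $\binom{24}{6}=134{,}596$ candidates by computing closures. To make the search more convincing and cheaper, I would quotient by the automorphism group. This group contains the dihedral group of order $24$, generated by the rotation $i\mapsto i+1$ and the reflection $i\mapsto -i$, and possibly further automorphisms exchanging inner and outer structure. Running the closure computation on one representative per orbit reduces the count to several thousand sets. As an independent sanity check, I would also run the unreduced search.

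The main obstacle is that the lower bound has no clean hand proof: the $6$-set case is genuinely a finite check. I would try first to prune it by hand using the requirement that the first force needs a black vertex with two black neighbours. Every $6$-set that forces must contain a closed-neighbourhood-minus-one configuration, that is, a vertex together with two of its three neighbours. Enumerating these starting configurations up to symmetry, and following the forced cascades until they stall, might make the argument human-checkable. If the case tree stays too large, I would rely on the exhaustive computer search and report it reproducibly, stating the representative orbits and the fact that every closure stalls short of $V$.
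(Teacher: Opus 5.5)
Your overall architecture, and in particular your lower-bound half, is the paper's. You reduce to exactly $6$ vertices by monotonicity, which the paper uses tacitly, and check all $\binom{24}{6}=134{,}596$ closures; the dihedral quotient is an optional speed-up. There is no inner/outer swap here, since $3^2\not\equiv\pm1\pmod{12}$.

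The gap is the upper bound, which is half the theorem. You never name a $7$-vertex forcing set, and the family you propose to search contains none. Up to rotation your candidates are $C_0\cup\{x\}$ with $C_0=\{u_0,u_1,u_2,v_0,v_1,v_2\}$.
\begin{itemize}
\item One checks $\mathrm{cl}(C_0)=C_0\cup\{u_3,u_{11}\}$. In it, each of $u_3,u_{11},v_0,v_1,v_2$ has exactly two white neighbours, and the only white vertices with two black neighbours are $v_3$ and $v_{11}$.
\item Hence a seventh vertex can trigger a new force only if $x\in\{u_4,v_3,v_4,v_5\}$, or $x$ is the mirror image of one of these under $i\mapsto 2-i$, a reflection that fixes $C_0$.
\item Tracing these cases: $x=v_4$ adds only $v_{10}$, while $x=u_4$, $x=v_3$ and $x=v_5$ each stall with $14$ of the $24$ vertices black.
\end{itemize}
Likewise the $8$-set you would start from, $\{u_0,\dots,u_3,v_0,\dots,v_3\}$, is not even forcing in $P(12,3)$. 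Its closure is the same $14$-vertex set as for $x=v_3$, so ``drop one vertex'' from it cannot succeed.

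The missing ingredient is the witness itself. The paper's is seven consecutive outer vertices, $S=\{u_0,\dots,u_6\}$, with no spokes. The cascade runs as follows:
\begin{itemize}
\item First $u_i\to v_i$ for $1\le i\le 5$. This puts two adjacent black vertices on each of the inner $4$-cycles $v_1v_4v_7v_{10}$ and $v_2v_5v_8v_{11}$.
\item The forces $v_1\to v_{10}$, $v_4\to v_7$, $v_2\to v_{11}$, $v_5\to v_8$ then close those cycles.
\item Now $v_7,v_8,v_{10},v_{11}$ have both inner neighbours black and force $u_7,u_8,u_{10},u_{11}$.
\item Finally $u_0\to v_0$, $u_6\to v_6$, $u_8\to u_9$ and $v_6\to v_9$ finish.
\end{itemize}
So your instinct that the closing-up of the inner $4$-cycles at $n=12$ is what saves a vertex is right. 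What makes it work here is that the seeds on those cycles are generated by forcing from a long outer run, not placed as spokes in the initial set. With this $S$ and its traced cascade in place of your search, your argument becomes complete.
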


We prove \Cref{thm:main} by an explicit $7$-vertex forcing set (\Cref{lem:witness}) and the matching lower bound $Z \ge 7$ from exhaustive search (\Cref{prop:lb}).

\subsection{An explicit 7-vertex zero forcing set for \texorpdfstring{$P(12,3)$}{P(12,3)}}

\begin{lemma}\label{lem:witness}
$S = \{u_0,u_1,u_2,u_3,u_4,u_5,u_6\}$ is a zero forcing set of $P(12,3)$. Hence $Z(P(12,3)) \le 7$.
\end{lemma}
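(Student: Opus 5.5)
The proof is a direct simulation of the color change rule starting from $S$. I will group the forces into rounds, where each round applies every force that is available at that point. Since $\mathrm{cl}(S)$ does not depend on the order in which forces are applied, it suffices to exhibit one valid sequence that ends with all $24$ vertices black. Throughout I use the adjacencies $u_i \sim u_{i\pm1}$, $u_i \sim v_i$ and $v_i \sim v_{i\pm3}$, with indices mod $12$.

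\emph{Round 1 (spokes).} Each interior outer vertex $u_i$ with $1 \le i \le 5$ has both outer neighbours $u_{i\pm1}$ black. So $v_i$ is its unique white neighbour, and $u_i \to v_i$ for $i=1,\dots,5$. The endpoints $u_0$ and $u_6$ still have two white neighbours each, so they cannot force yet.

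\emph{Round 2 (inner edges).} Look at the inner neighbourhoods $N(v_1)=\{u_1,v_4,v_{10}\}$, $N(v_2)=\{u_2,v_5,v_{11}\}$, $N(v_4)=\{u_4,v_7,v_1\}$ and $N(v_5)=\{u_5,v_8,v_2\}$. In each one, exactly one vertex is white, which gives $v_1\to v_{10}$, $v_2\to v_{11}$, $v_4\to v_7$ and $v_5\to v_8$. The vertex $v_3$ is stuck for now, since its neighbours $v_0$ and $v_6$ are both white.

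\emph{Round 3 (spokes back out).} Now $v_7,v_8,v_{10},v_{11}$ each have both inner neighbours black. For example, $N(v_7)=\{u_7,v_{10},v_4\}$ and $N(v_{11})=\{u_{11},v_2,v_8\}$. So they force outward: $v_7\to u_7$, $v_8\to u_8$, $v_{10}\to u_{10}$ and $v_{11}\to u_{11}$.

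\emph{Round 4 (cleanup).} At this point $u_0$ has only $v_0$ white, so $u_0\to v_0$. Likewise $u_6$ has only $v_6$ white, so $u_6\to v_6$. Also $u_8$ has only $u_9$ white, so $u_8\to u_9$. Finally $u_9$ has only $v_9$ white, so $u_9\to v_9$.

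At the end of these rounds every $u_i$ and every $v_i$ is black, so $\mathrm{cl}(S)=V$. Hence $S$ is a zero forcing set and $Z(P(12,3))\le |S| = 7$.

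The only real obstacle is bookkeeping. At each claimed force $x\to y$, one must check that all other neighbours of $x$ are already black. The index arithmetic mod $12$ is where errors creep in, for instance $1-3\equiv 10$ and $2-3\equiv 11$. So in the written proof I will display the neighbourhood of each forcing vertex explicitly, perhaps as a table with columns for the round, the force and the neighbourhood, so that every step can be checked by eye.
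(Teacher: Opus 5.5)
Your proof is correct and reproduces the paper's cascade round for round. The only difference is the last force: you take $u_9 \to v_9$, which is valid because once $u_8 \to u_9$ fires, $u_9$'s other neighbours $u_8, u_{10}$ are black. The paper instead uses $v_6 \to v_9$ in a separate fifth round. Because $u_9 \to v_9$ depends on a force made earlier in the same round, your Round 4 is a sequential step rather than a simultaneous one; this is harmless, since you only need some valid order of forces.
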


\begin{figure}[h]
\centering
\begin{minipage}{0.32\textwidth}\centering
\gpgen[0.6]{12}{3}{2.4}{0,1,2,3,4,5,6}{-1}\\[-3pt]
{\footnotesize Start: $S=\{u_0,\dots,u_6\}$}
\end{minipage}\hfill
\begin{minipage}{0.32\textwidth}\centering
\gpgen[0.6]{12}{3}{2.4}{0,1,2,3,4,5,6}{1,2,3,4,5}\\[-3pt]
{\footnotesize After Round 1}
\end{minipage}\hfill
\begin{minipage}{0.32\textwidth}\centering
\gpgen[0.6]{12}{3}{2.4}{0,1,2,3,4,5,6}{1,2,3,4,5,7,8,10,11}\\[-3pt]
{\footnotesize After Round 2}
\end{minipage}

\medskip

\begin{minipage}{0.32\textwidth}\centering
\gpgen[0.6]{12}{3}{2.4}{0,1,2,3,4,5,6,7,8,10,11}{1,2,3,4,5,7,8,10,11}\\[-3pt]
{\footnotesize After Round 3}
\end{minipage}\hfill
\begin{minipage}{0.32\textwidth}\centering
\gpgen[0.6]{12}{3}{2.4}{0,1,2,3,4,5,6,7,8,9,10,11}{0,1,2,3,4,5,6,7,8,10,11}\\[-3pt]
{\footnotesize After Round 4}
\end{minipage}\hfill
\begin{minipage}{0.32\textwidth}\centering
\gpgen[0.6]{12}{3}{2.4}{0,1,2,3,4,5,6,7,8,9,10,11}{0,1,2,3,4,5,6,7,8,9,10,11}\\[-3pt]
{\footnotesize After Round 5: all of $V$}
\end{minipage}
\caption{The cascade from $S=\{u_0,\dots,u_6\}$ in $P(12,3)$ (black), one panel per round of the proof of \Cref{lem:witness}. After five rounds every vertex is black.}
\label{fig:witness}
\end{figure}

\begin{proof}
Trace the color-change process; indices mod $12$. Initially $S$ is black, the other $17$ vertices white.

\smallskip\noindent\emph{Round 1.} For $i = 1,\dots,5$, vertex $u_i$ has black neighbors $u_{i-1},u_{i+1}\in S$ and one white neighbor $v_i$, so $u_i \to v_i$. ($u_0$ and $u_6$ each still have two white neighbors and do not force.) Now $v_1,\dots,v_5$ are black.

\smallskip\noindent\emph{Round 2.} Each $v_i$ has inner neighbors $v_{i-3},v_{i+3}$ and spoke $u_i$:
\begin{align*}
v_1 &\to v_{10} & &(v_4,\,u_1\text{ black};\ v_{10}\text{ white})\\
v_2 &\to v_{11} & &(v_5,\,u_2\text{ black};\ v_{11}\text{ white})\\
v_4 &\to v_{7} & &(v_1,\,u_4\text{ black};\ v_7\text{ white})\\
v_5 &\to v_{8} & &(v_2,\,u_5\text{ black};\ v_8\text{ white})
\end{align*}
($v_3$ has both inner neighbors $v_0,v_6$ white and does not force.) Now $v_7,v_8,v_{10},v_{11}$ are black.

\smallskip\noindent\emph{Round 3.} Each of $v_7,v_8,v_{10},v_{11}$ now has both inner neighbors black, so it forces its spoke partner:
\[
v_7 \to u_7, \quad v_8 \to u_8, \quad v_{10} \to u_{10}, \quad v_{11} \to u_{11}.
\]

\smallskip\noindent\emph{Round 4.} $u_0$ (neighbors $u_{11},u_1$ black, $v_0$ white) $\to v_0$; symmetrically $u_6 \to v_6$; and $u_8$ (neighbors $u_7,v_8$ black, $u_9$ white) $\to u_9$.

\smallskip\noindent\emph{Round 5.} The last white vertex is $v_9$, forced by $v_6$ (inner neighbors $v_3$ black, $v_9$ white; spoke $u_6$ black).

All $24$ vertices are black, so $S$ is a zero forcing set.
\end{proof}

\subsection{The matching lower bound}

\begin{proposition}\label{prop:lb}
No $6$-vertex set forces $P(12,3)$. Hence $Z(P(12,3)) = 7$.
\end{proposition}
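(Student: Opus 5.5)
The plan is an exhaustive computer search, organized so that it is small, easy to reproduce, and independently checkable. Two facts make this feasible. First, the closure $\mathrm{cl}(S)$ does not depend on the order in which forces are applied. So for each candidate $S$ it is enough to apply every available force repeatedly until nothing changes, and then test whether $|\mathrm{cl}(S)| = 24$. Second, the problem is monotone: if $S \subseteq T$ then $\mathrm{cl}(S) \subseteq \mathrm{cl}(T)$. Hence if no $6$-set forces, no smaller set does either, and only sets of size exactly $6$ need to be checked. Together with \Cref{lem:witness}, which gives $Z \le 7$, this yields $Z(P(12,3)) = 7$.

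To reduce the search, I will use the automorphism group of $P(12,3)$. It contains the rotation $\rho\colon u_i \mapsto u_{i+1}$, $v_i \mapsto v_{i+1}$ and the reflection $\sigma\colon u_i \mapsto u_{-i}$, $v_i \mapsto v_{-i}$, which together generate a dihedral group of order $24$. Since $3^2 \equiv -3 \not\equiv \pm 1 \pmod{12}$, there is no outer–inner swap of the kind found in $P(n,k)$ with $k^2 \equiv \pm1$, so I will use only this dihedral group.

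Zero forcing is invariant under automorphisms. It therefore suffices to test one representative from each dihedral orbit of $6$-subsets of the $24$ vertices. There are $\binom{24}{6} = 134{,}596$ such subsets in total, so the full enumeration is also trivial, and running it directly as a cross-check is cheap.

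A cheap necessary condition prunes the search further and also serves as a sanity check. Some vertex of $S$ must be able to force at the very first step. Because every vertex has degree $3$, this means some $x \in S$ has at least two of its neighbours in $S$. Sets failing this test are discarded immediately. Every remaining set is run to closure, and the output is the list of sets whose closure is all of $V$; this list should be empty.

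As a further validation, the same code should find the witness of \Cref{lem:witness} as a $7$-set, and should reproduce the values $Z(P(n,2)) = 6$ and $Z(P(n,3))$ for other $n$ reported in the paper.

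The only real obstacle is trust rather than difficulty. A bug in the implementation of the color change rule could produce a false negative, that is, wrongly report that a set fails to force. To guard against this I would:
\begin{itemize}
\item implement the closure twice, once by synchronous rounds and once with a work-queue, and check that both give identical closures on all sets;
\item verify on small graphs with known $Z$, such as the Petersen graph $P(5,2)$ with $Z = 5$;
\item report the number of orbits and the maximum closure size attained by a $6$-set, so that readers can reproduce the numbers exactly.
\end{itemize}

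A hand proof of the lower bound would need a structural argument, for example showing that a $6$-set leaves a "stalled" configuration. That is exactly the gap identified in \cite{RPT2020}, and I would not attempt it here.
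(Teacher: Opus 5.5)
Your proposal is correct and is essentially the paper's argument: an exhaustive check that none of the $\binom{24}{6}=134{,}596$ six-subsets has closure $V$, cross-validated by independent implementations, combined with \Cref{lem:witness} for $Z(P(12,3))\le 7$. The dihedral-orbit reduction and the first-force pruning are optional speedups. For the pruning, ``exactly two neighbours in $S$'' is the sharp condition, though your ``at least two'' is still a valid filter. Your monotonicity remark makes explicit why checking size exactly $6$ suffices, which the paper leaves implicit.
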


\begin{proof}
Exhaustive: all $\binom{24}{6} = 134{,}596$ six-element subsets were tested by computing each closure and comparing with $V$; none forces, in two independent implementations (\Cref{sec:methodology}). With \Cref{lem:witness} this gives $Z(P(12,3)) = 7$.
\end{proof}

This proves \Cref{thm:main}.

\section{A rigorous upper bound for all \texorpdfstring{$n$}{n}}\label{sec:upper}

We first prove $Z(P(n,3)) \le 8$ for every admissible $n$ with no computer, using the natural witness --- eight consecutive outer vertices --- so no ``for all $n$'' claim rests on finite computation.

\begin{proposition}\label{prop:upper}
For every $n \ge 9$, $S = \{u_0, \dots, u_7\}$ is a zero forcing set of $P(n,3)$. Hence $Z(P(n,3)) \le 8$ for all $n \ge 9$.
\end{proposition}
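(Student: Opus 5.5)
The plan is to trace the forcing process from $S=\{u_0,\dots,u_7\}$ for general $n$, as in the proof of \Cref{lem:witness}. The goal is to reach a state described by a simple invariant and then show that the invariant propagates around the cycle. Indices are taken mod $n$ throughout.

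\emph{Opening rounds.} For $i=1,\dots,6$, the vertex $u_i$ has both outer neighbours in $S$, so $u_i\to v_i$. Next, each $v_i$ with $1\le i\le 3$ has $u_i$ and $v_{i+3}$ black, so $v_i\to v_{i-3}$. This blackens $v_{n-2},v_{n-1},v_0$. Symmetrically, $v_4\to v_7$, $v_5\to v_8$ and $v_6\to v_9$. Now $u_0$ has $u_1$ and $v_0$ black, so $u_0\to u_{n-1}$. Likewise $u_7$ has $u_6$ and $v_7$ black, so $u_7\to u_8$. At this point the black set contains
\[
\{u_j : j\in[a,b]\}\cup\{v_j : j\in[a-1,b+1]\}
\]
with $a=n-1$ and $b=8$, taken as cyclic intervals. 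I will use this shape as the invariant.

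\emph{Inductive step.} Suppose the black set contains the outer interval $[a,b]$ and the inner interval $[a-1,b+1]$, with at least six outer vertices in the window. On the right, $u_b$ has $u_{b-1}$ and $v_b$ black, so it forces $u_{b+1}$. Then $v_{b-1}$ has $u_{b-1}$ and $v_{b-4}$ black, which holds because $b-4\ge a-1$. So $v_{b-1}$ forces $v_{b+2}$. The result is the outer interval $[a,b+1]$ and the inner interval $[a-1,b+2]$. The mirror-image moves $u_a\to u_{a-1}$ and $v_{a+1}\to v_{a-2}$ extend the window on the left in the same way. In each step the forcing vertex has its other two neighbours black. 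So it performs the force if the target is still white, and there is nothing to do if the target is already black. Hence the windows grow by one on each side per step until the outer window covers the whole cycle. Then the inner window also covers everything, and every vertex is black.

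\emph{Main obstacle.} The delicate part is bookkeeping for small $n$ and for the closing phase. In the opening rounds, for $n\in\{9,10,11\}$ the targets $v_{n-2},v_{n-1},v_0$ can coincide with, or be adjacent to, $v_7,v_8,v_9$. Also, $u_0\to u_{n-1}$ needs $u_{n-1}$ to be the only white neighbour, which fails if $u_{n-1}$ is already black. I will dispose of $n=9,10,11$ by short explicit traces in the style of \Cref{lem:witness}, and run the invariant argument for $n\ge 12$, where the indices $n-2,\dots,9$ are all distinct. In the closing phase, when the white gap has at most three outer vertices, I will check directly that each claimed forcer still has exactly one white neighbour or that its target is already black. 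This is where wraparound of the inner edges, $v_{b+2}=v_{a-k}$ for small gaps, could in principle interfere, and I expect it to need a brief case check on the gap size modulo $3$.
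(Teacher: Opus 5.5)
Your argument is correct, and it takes a different route from the paper. The paper performs only the right-hand part of your opening: $u_i\to v_i$ for $1\le i\le 6$, then $v_4\to v_7$, then $u_7\to u_8$. This needs nothing beyond the nine indices $0,\dots,8$ being distinct mod $n$. The paper then replaces all further propagation with a symmetry argument. Since $\rho(S)=\{u_1,\dots,u_8\}\subseteq\mathrm{cl}(S)$, equivariance and monotonicity give $\rho(\mathrm{cl}(S))\subseteq\mathrm{cl}(S)$. So $\mathrm{cl}(S)$ is rotation-invariant, contains the whole outer cycle, and then each $u_i$ forces $v_i$.

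Your window invariant does the same job explicitly. The black set contains the outer interval $[a,b]$ and the inner interval $[a-1,b+1]$, and it is extended by $u_b\to u_{b+1}$ and $v_{b-1}\to v_{b+2}$. This exhibits the actual cascade and needs no (M)/(E) machinery, at the price of tracking the window. The paper's trick buys brevity and never has to look at wraparound at all.

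The ``main obstacle'' you flag is not a real one, and the deferred checks (explicit traces for $n=9,10,11$, and a closing-phase case split mod $3$) are unnecessary. Every step you use has the form ``a black vertex has two of its three neighbors black, so the third lies in $\mathrm{cl}(S)$.'' That is valid whether or not the target is already black, as you yourself note in the inductive step. Two facts make every step go through:
\begin{itemize}
\item The three neighbors of any vertex of $P(n,3)$ are distinct for $n\ge 7$; in particular $v_{i-3}\ne v_{i+3}$ because $n\nmid 6$.
\item The two witnesses are black by the invariant, which only grows.
\end{itemize}
So coincidences are harmless: $v_9=v_0$ and $u_8=u_{n-1}$ at $n=9$, or $v_{b+2}$ hitting an already-black vertex near the end. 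Read the intervals as integer intervals reduced mod $n$, and stop once $b-a+1\ge n$. Then your opening and induction work verbatim for every $n\ge 9$.

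Two further simplifications are available. You only need $b-a\ge 3$ to guarantee $v_{b-4}\in[a-1,b+1]$, rather than six outer vertices. And extending the window on one side alone suffices.
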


Two properties of $\mathrm{cl}$, immediate from the rule:
\begin{itemize}
\item[(M)] \emph{Monotonicity}: $A \subseteq B \implies \mathrm{cl}(A) \subseteq \mathrm{cl}(B)$.
\item[(E)] \emph{Equivariance}: for any automorphism $\varphi$ of $G$, $\mathrm{cl}(\varphi(A)) = \varphi(\mathrm{cl}(A))$.
\end{itemize}
The rotation $\rho \colon u_i \mapsto u_{i+1},\ v_i \mapsto v_{i+1}$ (mod $n$) is an automorphism of $P(n,3)$.

\begin{proof}[Proof of \Cref{prop:upper}]
\emph{Step 1: $u_8 \in \mathrm{cl}(S)$.} From $S$, these forces are valid in order for every $n \ge 9$:
for $i = 1,\dots,6$, $u_i$ has black neighbors $u_{i-1},u_{i+1}$ and one white neighbor $v_i$, so $u_i \to v_i$; then $v_4 \to v_7$ ($v_1,u_4$ black, $v_7$ white); then $u_7 \to u_8$ ($u_6,v_7$ black, $u_8$ white). Every index used lies in $\{0,\dots,8\}$; since $n \ge 9$ these nine indices are distinct mod $n$, so each forcing vertex above has exactly the three neighbors listed for it, with the stated colors. Thus $u_8 \in \mathrm{cl}(S)$.

\emph{Step 2: $\mathrm{cl}(S)$ is $\rho$-invariant.} Now $\rho(S) = \{u_1,\dots,u_8\} \subseteq \mathrm{cl}(S)$. By (E), (M), and idempotence,
\[
\rho(\mathrm{cl}(S)) = \mathrm{cl}(\rho(S)) \subseteq \mathrm{cl}(\mathrm{cl}(S)) = \mathrm{cl}(S).
\]
As $\rho$ is a bijection of the finite set $V$, equal cardinalities force $\rho(\mathrm{cl}(S)) = \mathrm{cl}(S)$.

\emph{Step 3: $\mathrm{cl}(S) = V$.} The set $\mathrm{cl}(S)$ contains $u_0$ and is $\rho$-invariant, and the $\rho$-orbit of $u_0$ is the whole outer cycle, so every $u_i \in \mathrm{cl}(S)$. If some $v_i$ were white, then $u_i$ (black, with black outer neighbors) would have $v_i$ as its only white neighbor and force it --- contradiction. So $\mathrm{cl}(S) = V$.
\end{proof}

\section{Corrected values and statement}\label{sec:corrected}

\Cref{tab:zf3} gives $Z(P(n,3))$ for $7 \le n \le 20$. Every entry is exhaustive: for each $n$ we found the least $s$ admitting an $s$-vertex forcing set. All values were computed in two independent implementations, cross-validated against each other and against known cases before use (\Cref{sec:methodology}).

\begin{table}[h]
\centering
\begin{tabular}{@{}lcccccccccccccc@{}}
\toprule
$n$ & 7 & 8 & 9 & 10 & 11 & 12 & 13 & 14 & 15 & 16 & 17 & 18 & 19 & 20 \\
\midrule
$Z(P(n,3))$ & 6 & 6 & 6 & 8 & 7 & 7 & 8 & 8 & 8 & 8 & 8 & 8 & 8 & 8 \\
\bottomrule
\end{tabular}
\caption{$Z(P(n,3))$, each verified by exhaustive search in two independent implementations. Note the spike to $8$ at $n = 10$, the drop to $7$ at $n = 11,12$ (of which $n = 12$ contradicts \cite[Theorem 3.6]{RPT2020}), then $8$ from $n = 13$ on.}
\label{tab:zf3}
\end{table}

Proved without qualification:
\begin{itemize}
\item $Z(P(n,3)) \le 8$ for all $n \ge 9$ (\Cref{prop:upper}, computer-free);
\item $Z(P(n,3))$ equals \Cref{tab:zf3} for $7 \le n \le 20$; in particular $Z(P(n,3)) = 8$ for $13 \le n \le 20$ and $Z(P(11,3)) = Z(P(12,3)) = 7$.
\end{itemize}
The natural corrected form of \cite[Theorem 3.6]{RPT2020} raises the threshold to $n \ge 13$:

\begin{conjecture}\label{conj:main}
$Z(P(n,3)) = 8$ for every $n \ge 13$.
\end{conjecture}

This is a conjecture, not a theorem. Its upper half holds for all $n$ by \Cref{prop:upper}; the content is the lower bound $Z(P(n,3)) \ge 8$ for all $n \ge 13$, which we verified only for $n \le 20$. Proving it for all $n$ means excluding $7$-vertex forcing sets uniformly in $n$ --- the very step missing from \cite{RPT2020}. We will not assert it from finite computation, which would reproduce the gap this note corrects; the non-monotone values for small $n$ (\Cref{tab:zf3}) show such extrapolation is not automatically safe.

\subsection{Where the published proof fails}

The proof of \cite[Theorem 3.6]{RPT2020} argues the lower bound by considering how a white outer vertex $u_i$ can be forced --- by $u_{i-1}$, $u_{i+1}$, or $v_i$ --- and concludes that ``the best case for the color-change processing in the vertices of the outer cycle is that the vertex $u_{i-1}$ forces the vertex $u_i$.'' It then checks that $\{u_1,u_2,u_3,u_4\}$ fails to force and states, ``by a simple argument,'' that $A = \{u_1,\dots,u_8\}$ is required. That leap skips all sizes $5,6,7$. It never tests $7$ consecutive outer vertices --- $\{u_0,\dots,u_6\}$ in our ($0$-based) indexing; by rotational symmetry the starting index is immaterial --- which \Cref{lem:witness} shows \emph{do} force $P(12,3)$.

That $7$-vertex set forces $P(n,3)$ for $n = 11, 12$ but for no other $n$ in the range $10 \le n \le 300$ (verified computationally, one closure per $n$); \Cref{fig:fail} shows the stall at $n = 13$. The informal reasoning is implicitly calibrated to large $n$, so it gives the right value throughout its range except the boundary $n = 12$. The error is not in the target value $8$ but in the unproven jump from ``a few small consecutive sets fail'' to ``$8$ vertices are necessary.''

\begin{figure}[H]
\centering
\gpgen[0.6]{13}{3}{2.4}{0,1,2,3,4,5,6,8,11}{1,2,3,4,5,7,8,11,12}
\caption{In $P(13,3)$, the closure of $\{u_0,\dots,u_6\}$ (the black vertices shown) stalls at $18$ of $26$ vertices; the eight white vertices are never forced. The same set forces $P(11,3)$ and $P(12,3)$ but no other $P(n,3)$ with $10 \le n \le 300$.}
\label{fig:fail}
\end{figure}

\subsection{Methodology}\label{sec:methodology}

All computations used two independently written programs: a C implementation enumerating subsets by $64$-bit bitmask with Gosper's-hack successor, and a Python implementation using set-based simulation. Both were validated against \cite[Theorems 2.5 and 2.6]{RPT2020} ($Z(P(n,2)) = 6$ for $10 \le n \le 16$; $Z(P(11,5)) = 6$) before being applied here, and agreed on every value reported in this note. Determining $Z(P(n,3))$ for one $n$ means confirming no size-$(Z-1)$ set forces; the largest enumeration is $\binom{40}{7} \approx 1.86 \times 10^{7}$ at $n = 20$. Single-set claims (such as the behavior of $\{u_0,\dots,u_6\}$ for $10 \le n \le 300$) require only one closure computation per $n$.

\section{Conclusion}

\cite[Theorem 3.6]{RPT2020} is false at $n = 12$, where $Z(P(12,3)) = 7$; the flaw is a case analysis that never excludes $7$-vertex forcing sets. We proved $Z(P(n,3)) \le 8$ for all $n \ge 9$ uniformly, and computed the corrected values for $7 \le n \le 20$, which stabilize at $8$ from $n = 13$. Whether they stay $8$ for all larger $n$ (\Cref{conj:main}) is open, for the same reason: the matching lower bound for all $n$ needs a proof, not computation. Determining $Z(P(n,k))$ for general $k$ --- the stabilization threshold and a rigorous lower bound for $k \ge 4$ --- remains open.


\begin{thebibliography}{9}

\bibitem{RPT2020}
S.~Rashidi, N.~Shajareh Poursalavati, and M.~Tavakkoli,
\emph{Computing the zero forcing number for generalized Petersen graphs},
Journal of Algebra Combinatorics Discrete Structures and Applications \textbf{7}(2) (2020), 183--193.
\url{https://doi.org/10.13069/jacodesmath.729465}

\bibitem{AIM2008}
AIM Minimum Rank -- Special Graphs Work Group,
\emph{Zero forcing sets and the minimum rank of graphs},
Linear Algebra and its Applications \textbf{428} (2008), no.~7, 1628--1648.

\end{thebibliography}
\end{document}